\newtheorem{theorem}{Theorem}[section]
\newtheorem{lemma}[theorem]{Lemma}
\newtheorem{corollary}[theorem]{Corollary}
\theoremstyle{definition}
\newtheorem{definition}[theorem]{Definition}
\newtheorem{example}[theorem]{Example}
\newtheorem{remark}[theorem]{Remark}
\newtheorem *{Theorem A}{Theorem A}
\newtheorem *{Theorem B}{Theorem B}
\newtheorem *{Corollary B}{Corollary B}
\newtheorem *{Theorem C}{Theorem C}
\newtheorem *{Theorem D}{Theorem D}
\newtheorem *{Projective Schur's Lemma}{Projective Schur's Lemma}
\begin{document}
\title{On the reducibility of exact covering systems}
\author{Ofir Schnabel}
\address{Institute of Algebra and Number Theory, 
University of Stuttgart, Stuttgart 70569, Germany}
\email{os2519@yahoo.com}

\begin{abstract}
There exist irreducible exact covering systems (ECS). These are ECS which are not a proper split
of a coarser ECS. However,
an ECS admiting a maximal modulus which is divisible by at most two distinct primes, primely splits a coarser ECS.
As a consequence, if all moduli of an ECS $A$, are divisible by at most two distinct primes, then
$A$ is natural. That is, $A$ can be formed by iteratively splitting the trivial ECS. 
\end{abstract}
\date{\today\vspace{-0.5cm}}
\maketitle
\bibliographystyle{abbrv}

\section{Introduction}\pagenumbering{arabic} \setcounter{page}{1}
An \textit{exact covering system} (ECS) is a partition of
$\mathbb{Z}$ into finitely many arithmetic progressions
\begin{equation}\label{eq:ecs}
A={\{a_s(n_s)\}_{s=1}^k},
\end{equation}
where $a(n)$ is the arithmetic progression $a+\mathbb{Z}n$. Here, $n$ is
the \textit{modulus} of the arithmetic progression $a(n)$.
An ECS~\eqref{eq:ecs} admits \textit{multiplicity} if there exist
$1\leq i<j\leq k$ such that $n_i=n_j$. The ECS $A=\{0 (1)\}$ is
called the \textit{trivial ECS}.

The concept of ECS was first
introduced by P. Erd\H{o}s in the early 1930's.
A main concern in the research on ECS is finding restraints on the number of times
a modulus occurs in an ECS.
Erd\H{o}s conjectured the following: Every non-trivial ECS
admits multiplicity. Erd\H{o}s conjecture was
proved in the beginning of the 1950's independently by H. Davenport,
L. Mirsky, D. Newman and R. Rado (see \cite{erdos52}). In fact, the proof
shows that such multiplicity occurs in the \textit{greatest}
modulus. This result was generalized by \v{S}. Zn\'{a}m \cite{SZ70} and later by Y.G. Chen and {\v{S}}. Porubsk{\`y} \cite{chen}.
The proofs in these papers use generating functions of an ECS and
a deep relation between the number of times the greatest difference $m$ occurs in an ECS
and minimal vanishing sums of $m$-th roots of unity (see \cite{LL00}).
However, results in the spirit of the above results were obtained in \cite{MR875291},\cite{MR837965} using combinatorical methods.
For a more comprehensive study of ECS the reader is referred to a
monograph by \v{S}. Porubsk\'{y} \cite{porubsky1981} and to a review
by \v{S}. Porubsk\'{y} and J. Sch\"{o}nheim \cite{porubsky1999}.

Our main concern in this note is \textit{reducibility} of ECS. Notice that
for any natural number $n$, there is a \textit{basic} ECS
\begin{equation}
\{i(n)\}_{i=0}^{n-1}.
\end{equation}
This is a \textit{splitting} of the trivial ECS.
In a similar way we can split any ECS by splitting an arithmetic progression
$a(t)$ into $n$ arithmetic progressions
\begin{equation}
\{a+it(tn)\}_{i=0}^{n-1}.
\end{equation}
An ECS is \textit{natural} if it is formed by iteratively splitting the trivial ECS.
\begin{definition}
An ECS $A$ primely splits an ECS $B$, (denote $A\models B$),
if there exists a prime number $p$ such that
\[
B = \left\{ {a_i \left( {n_i } \right)} \right\}_{i = 1}^k ,\quad
A = \left\{ {a_i \left( {n_i } \right)} \right\}_{i = 1}^{k - 1}
\bigcup {\left\{ {a_k  + jn_k (pn_k )} \right\}_{j = 0}^{p - 1} }.
\]
In other words: $A$ is obtained from $B$ by splitting one of the
arithmetic progressions into $p$ arithmetic progressions.
\end{definition}
Throughout this note, maximality will be with respect to the division partial order.
In particular, when given an ECS, a modulus is maximal if it does not divide any other modulus in this ECS.
Our main theorem is the following
\begin{Theorem A}
Suppose that an ECS $A=\{a_s(n_s)\}_{s=1}^k$ has a maximal modulus
of the form $p_1^{k_1}p_2^{k_2}$, where $p_1,p_2$
are primes and $k_1,k_2\geq 0$. Then $A$ primely splits an ECS $B$.
\end{Theorem A}
In a sense, Theorem A discusses reducibility of ECS. Denote the
least common multiple of $B=\{n_1, n_2 ,\ldots , n_k\}\subseteq
\mathbb{N}$ by $N(B)$. For an ECS~\eqref{eq:ecs}, denote the least
common multiple of all the moduli by $N(A)$. Throughout this note
$p_i$ stands for prime number.
 In two papers
\cite{korec1984, korec1986}, I. Korec investigate
ECS with $N(A)=p_1^{k_1}p_2^{k_2}$ and
$N(A)=p_1^{k_1}p_2^{k_2}p_3^{k_3}$. In particular, he discusses the
reducibility (see \cite{korec1984}) of such ECS. In \cite{polach},
I. Pol{\'a}ch generalizes some of Korec's results. The approach
adopted by Korec and Pol{\'a}ch is essentially different from the
classical approach of generating functions. Theorem A is in the
same spirit as Korec and Pol{\'a}ch results. However, our methods
are similar to the classical methods and rely heavily on the
above mentioned relation between ECS and vanishing sums of roots of
unity. As a corollary of Theorem A we get the following result which deals with some natural ECS. In particular, it classifies all
the ECS with $N(A)=p_1^{k_1}p_2^{k_2}$.
\begin{corollary}\label{th:ECS3} 
Let $A={\{a_s(n_s)\}_{s=1}^k}$ be an ECS. If no modulus is divisible by more than two distinct primes then
$$A=A_1\models A_2\models \ldots \models A_{n-1} \models \{0 (1)\}.$$
In particular, this holds in the case when $N(A)$ admits at most two prime factors (see \cite{korec1984}).
\end{corollary}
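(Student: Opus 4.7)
The plan is to iterate Theorem A, inducting on the number $k$ of arithmetic progressions in $A$. The base case $k=1$ is immediate: a single progression $a(n)$ covers $\mathbb{Z}$ only if $n=1$, so $A=\{0(1)\}$ and the required chain is the trivial one-term chain.

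For the inductive step ($k>1$) I would argue as follows. Since every modulus of $A$ is divisible by at most two distinct primes, in particular every \emph{maximal} modulus has the form $p_1^{k_1}p_2^{k_2}$, so Theorem A applies and produces an ECS $B$ with $A\models B$. To invoke the inductive hypothesis on $B$ I need two things: that $B$ has strictly fewer progressions than $A$, and that $B$ still satisfies the hypothesis of the corollary. The first is immediate from the definition of primely splits, which trades $p$ progressions of modulus $pn_k$ in $A$ for a single progression of modulus $n_k$ in $B$, reducing the number of progressions by $p-1\geq 1$. For the second, each modulus of $B$ divides some modulus of $A$, and passing to a divisor cannot introduce new prime factors; hence every modulus of $B$ also has at most two distinct prime factors. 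Applying the inductive hypothesis to $B$ yields a chain $B=A_2\models A_3\models\ldots\models\{0(1)\}$, which prepended with $A\models B$ gives the desired chain for $A$. The stated special case $N(A)=p_1^{k_1}p_2^{k_2}$ is then automatic, since every modulus $n_i$ divides $N(A)$ and therefore has at most two distinct prime factors.

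The main obstacle here is of course Theorem A itself, the substantive result of the paper; once it is granted, the corollary reduces to a routine inductive packaging. The only point that really deserves attention is the stability of the hypothesis ``at most two distinct prime factors per modulus'' under coarsening along $\models$, but this is essentially an arithmetic triviality: coarsening strips a prime factor from a single modulus and does not create any new ones.
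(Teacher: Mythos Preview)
Your proof is correct and follows essentially the same approach as the paper's: both iterate Theorem~A, observing that the hypothesis ``at most two distinct prime factors per modulus'' is preserved under $\models$ because every modulus of $B$ divides some modulus of $A$. The paper's argument is terser and does not name the induction variable explicitly, but the content is identical.
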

Another corollary of Theorem A is a restraint on ECS $A$ with $N(A)= p_1^{k_1}p_2^{k_2}p_3^{k_3}$.
\begin{corollary}\label{th:ECS4}
Let $A={\{a_s(n_s)\}_{s=1}^k}$ be an ECS such that $N(A)= p_1^{k_1}p_2^{k_2}p_3^{k_3}$.
Assume also there exist moduli $n_1,n_2,n_3$ such that
\begin{equation}
\begin{array}{ccc}
p_1\mid n_1n_2, & p_2\mid n_1n_3, & p_3\mid n_2n_3, \\
p_1 \nmid n_3, & p_2 \nmid n_2, & p_3 \nmid n_1
\end{array}
\end{equation}
Then $p_1p_2p_3$ divides some modulus $n_j$.
\end{corollary}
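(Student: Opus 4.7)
The approach is to argue by contradiction and invoke Corollary~\ref{th:ECS3}. Suppose no modulus of $A$ is divisible by $p_1p_2p_3$. Since $N(A)=p_1^{k_1}p_2^{k_2}p_3^{k_3}$, every modulus has prime factors in $\{p_1,p_2,p_3\}$, and the assumption forces at least one exponent to be zero. Hence no modulus of $A$ has more than two distinct prime factors, so Corollary~\ref{th:ECS3} shows that $A$ is natural: there is a reduction chain
\[
A=A_1\models A_2\models\cdots\models A_{n-1}\models\{0(1)\}.
\]

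The key step is to extract from this chain a prime $p_0$ that divides every modulus of $A$. Read the chain forward from $\{0(1)\}$ to $A$: the first step, in which $A_{n-1}$ is a prime split of $\{0(1)\}$, produces an ECS all of whose moduli equal a single prime $p_0$. Every subsequent prime split merely replaces one AP of modulus $m$ by $p$ APs of modulus $pm$, so divisibility by $p_0$ is preserved at every stage. Note that the hypothesis $p_1\mid n_1n_2$ rules out $A=\{0(1)\}$, so the chain is non-empty and $p_0$ actually exists; the same observation shows that no AP in $A$ has modulus $1$, hence $n_1,n_2,n_3>1$.

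To close the argument, write $S(n)$ for the set of prime divisors of $n$. The conditions $p_3\nmid n_1$, $p_2\nmid n_2$, $p_1\nmid n_3$, combined with $S(n_j)\subseteq\{p_1,p_2,p_3\}$, force
\[
S(n_1)\subseteq\{p_1,p_2\},\qquad S(n_2)\subseteq\{p_1,p_3\},\qquad S(n_3)\subseteq\{p_2,p_3\}.
\]
The prime $p_0$ must belong to all three sets, but their intersection is empty. This contradiction completes the proof. The substantive ingredient is Corollary~\ref{th:ECS3}; once we have naturality of $A$, the extraction of a common prime divisor from the reduction chain is the only genuinely new step, and it is immediate from how prime splits act on moduli.
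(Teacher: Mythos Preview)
Your proof is correct. Both arguments proceed by contradiction, assuming no modulus is divisible by $p_1p_2p_3$, but they finish differently.

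The paper applies Theorem~A selectively: it repeatedly reduces only maximal moduli of the form $p_1^{l_1}p_2^{l_2}$ (with $l_1,l_2\ge 1$) until none remain. In the terminal ECS $A_l$ the descendant of $n_1$ has become a pure power of $p_1$ or of $p_2$, while the moduli $n_2$ (prime factors in $\{p_1,p_3\}$) and $n_3$ (prime factors in $\{p_2,p_3\}$), being untouched by this reduction, survive. One then finds a pair of coprime moduli in $A_l$, which is impossible by the Chinese remainder theorem.

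You instead invoke Corollary~\ref{th:ECS3} wholesale to get full naturality of $A$, and then make the clean structural observation that any non-trivial natural ECS has a single prime $p_0$ dividing every modulus (the prime used in the very first split). This $p_0$ would have to lie in $S(n_1)\cap S(n_2)\cap S(n_3)\subseteq\{p_1,p_2\}\cap\{p_1,p_3\}\cap\{p_2,p_3\}=\emptyset$, a contradiction.

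The two endings are essentially dual: the paper exhibits two coprime moduli, you exhibit a putative common prime factor and show none can exist. Your route is tidier, since the iteration is packaged into Corollary~\ref{th:ECS3} and the finish is a one-line set intersection; the paper's route is slightly more economical in that it only reduces moduli of one shape rather than unwinding $A$ all the way to $\{0(1)\}$.
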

{\bf Acknowledgements.}
This paper is a part of the author's M.Sc dissertation under the supervision of Y. Ginosar.
\section{Preliminaries}
Given an ECS~\eqref{eq:ecs} we may always assume that
\begin{equation}\label{eq:cut}
0 \leq a_s < n_s\quad  \text{for all}\quad 1 \leq s \leq k.
\end{equation}
The classical approach for investigating multiplicity is to consider the following generating function.
For $|z|<1$ we have:
\begin{equation}\label{eq:genr}
\sum_{s=1}^k \frac{z^{a_s}}{1-z^{n_s}}= \sum_{s=1}^k
\sum_{q=0}^\infty z^{a_s+qn_s}= \sum_{n=0}^\infty z^n=
\frac{1}{1-z}.
\end{equation}
Let $B=\{n_1,n_2,\ldots ,n_k\}$ be a set of natural numbers.
Assume that $n_r$ is a maximal element in $B$. Denote the least common multiple of $n_1,n_2,\ldots ,n_s$ by $N$.
Let $C_{n_r}$ be the cyclic group of order $n_r$ generated by
$\overline{z}$ and let $\zeta$ be a primitive \textit{$n_r$}-th root
of unity. Consider the following ring homomorphisms:
\begin{align*}
 \gamma : \mathbb{Z}[z] & \rightarrow \mathbb{Z}C_{n_r} \\
  z & \mapsto \overline{z}\\
 \varphi : \mathbb{Z}C_{n_r} & \rightarrow \mathbb{C} \\
  \overline{z} & \mapsto \zeta.
 \end{align*}
Here, $\mathbb{Z}[z]$ is the ring of polynomials with integral coefficients in the variable $z$
and $\mathbb{Z}C_{n_r}$ is the integral group algebra.
The following two lemmas will be used in the proof of Theorem A.
\begin{lemma}\label{th:geo}
With the above notation, let $t$ be a divisor of $N(B)$. Then
$\varphi \circ \gamma\left(\frac{1-z^{N(B)}}{1-z^t}\right)\neq 0$ if and only if $n_r$ divides $t$.
In particular,
\begin{enumerate}
\item
$$\varphi \circ \gamma\left(\frac{1-z^{N(B)}}{1-z}\right)=0.$$
\item By the maximality of $n_r$, for $1\leq i\leq k$
$$\varphi \circ \gamma\left(\frac{1-z^{N(B)}}{1-z^{n_i}}\right)=0$$
if and only if $n_i\neq n_r$.
\end{enumerate}
\end{lemma}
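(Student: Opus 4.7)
The plan is to exploit the fact that when $t \mid N(B)$, the rational function $\frac{1-z^{N(B)}}{1-z^t}$ is literally the polynomial $\sum_{j=0}^{N(B)/t - 1} z^{jt}$, after which everything reduces to a finite geometric sum in $\mathbb{C}$.

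First I would rewrite
\[
\frac{1-z^{N(B)}}{1-z^t} = \sum_{j=0}^{N(B)/t - 1} z^{jt} \in \mathbb{Z}[z],
\]
apply $\gamma$ termwise to land in $\mathbb{Z}C_{n_r}$, and then push through $\varphi$ to obtain the complex number $S := \sum_{j=0}^{N(B)/t - 1} \zeta^{jt}$. Everything now hinges on whether $\omega := \zeta^t$ equals $1$ or not. Since $\zeta$ is a primitive $n_r$-th root of unity, $\omega = 1$ exactly when $n_r \mid t$.

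Next, I would split into the two cases. If $n_r \mid t$, then every term of $S$ equals $1$, so $S = N(B)/t \neq 0$. If $n_r \nmid t$, then $\omega \neq 1$, so the usual closed form gives $S = \frac{1 - \omega^{N(B)/t}}{1-\omega} = \frac{1-\zeta^{N(B)}}{1-\omega}$; but $n_r \mid N(B)$ by definition of the least common multiple, hence $\zeta^{N(B)} = 1$ and $S = 0$. This establishes the equivalence.

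The two listed consequences are then immediate. For (1) take $t = 1$: assuming $n_r > 1$ (otherwise the ECS is trivial and there is nothing to say), $n_r \nmid 1$, so the image vanishes. For (2) take $t = n_i$, which divides $N(B)$; if $n_i = n_r$ then trivially $n_r \mid t$ and the image is nonzero, while if $n_i \ne n_r$, maximality of $n_r$ in the divisibility order forces $n_r \nmid n_i = t$, so the image is $0$. There is no real obstacle here: the only thing one needs to notice beyond the geometric-sum identity is that $n_r \mid N(B)$, which is exactly what makes the numerator $1 - \zeta^{N(B)}$ vanish and kills $S$ whenever the denominator $1-\zeta^t$ survives.
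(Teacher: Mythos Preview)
Your proof is correct and follows essentially the same approach as the paper: both arguments hinge on the geometric-series expansion of $\frac{1-z^{N(B)}}{1-z^t}$ and the observation that $\zeta^{N(B)}=1$ because $n_r\mid N(B)$. Your presentation is in fact a bit more streamlined, since you expand directly as a single geometric sum in $z^t$ and evaluate, whereas the paper treats the two cases with slightly different manipulations (factoring through $1-\zeta^t$ when $n_r\nmid t$, and rewriting as a ratio of geometric sums in $z^{n_r}$ when $n_r\mid t$).
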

\begin{proof}
First, since $n_r$ divides $N(B)$,
$$\varphi \circ \gamma(1-z^{N(B)})=1-\zeta ^{N(B)}=0.$$
Now, $t$ admits the following decomposition $t=qn_r+r$ such that
$q,r$ are natural numbers and $0\leq r< n_r$. Then, $\varphi \circ
\gamma(1-z^t)=1-\zeta ^t=1-\zeta ^r$. Hence, if $n_r$ is not a
divisor of $t$ we get that
$$\varphi \circ \gamma(\frac{1-z^{N(B)}}{1-z^t})\neq 0.$$
Assume now that $n_r$ is a divisor of $t$ and recall that if
$r_1|r_2$ then
 \begin{equation}\label{eq:poly}
 \frac{1-z^{r_2}}{1-z^{r_1}}=\sum _{i=0}^{\frac{r_2}{r_1}-1} z^{i\cdot r_1}.
 \end{equation}
Then, if we denote $N(B)=cn_r$ and $t=qn_r$ we get that
\begin{equation}
\frac{1-z^N(B)}{1-z^t}=\frac{\sum _{i=0}^{c-1} z^{in_r}}{\sum
_{i=0}^{q-1} z^{in_r}}.
\end{equation}
Hence,
\begin{equation}
\varphi \circ
\gamma\left(\frac{1-z^{N(B)}}{1-z^{t}}\right)=\frac{\overbrace{1+1+\ldots
+1}^{c \text{ times}}}{\underbrace{1+1+\ldots +1}_{q \text{
times}}}\neq 0.
\end{equation}
\end{proof}
Let $P_i=\{\bar{z}^{\frac{jn_r}{p_i}}\}_{j=0}^{p_i-1}$ be the unique subgroup of order $p_i$ in $C_{n_r}$.
Define
$$\sigma _{n_r}(P_i):=\sum _{g\in P_i} g\in \mathbb{Z}C_{n_r}=\sum _{j=0}^{p_i-1}\bar{z}^\frac{j\cdot n_r}{p_i}.$$
\begin{lemma}\label{lemma:ll}(\cite[Theorem 3.3]{LL00})
Let $n_r=p_1^{k_1}p_2^{k_2}$ and let $0\neq x\in \mathbb{N}C_{n_r}\cap$ker$(\varphi)$.
Then $x$ admits one of the following decompositions:
$$x=\overline{z}^d\cdot \sigma _{n_r}(P_1)+ \sum _{i=0}^{n_r-1} b_i\overline{z}^i,\quad
b_i\geq 0,\quad 0\leq d< \frac{n_r}{p_1};$$
or
$$x=\overline{z}^d\cdot \sigma _{n_r}(P_2)+ \sum _{i=0}^{n_r-1} b_i\overline{z}^i,\quad
b_i\geq 0,\quad 0\leq d< \frac{n_r}{p_2}.$$
\end{lemma}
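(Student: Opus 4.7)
The plan is to describe $\ker(\varphi)$ explicitly via the Sylow decomposition of $C_{n_r}$, and then to show that any non-negative element of this kernel contains, coefficient-wise, a translate of $\sigma_{n_r}(P_1)$ or of $\sigma_{n_r}(P_2)$. Since $\gcd(p_1^{k_1},p_2^{k_2})=1$ we have $C_{n_r}\cong C_{p_1^{k_1}}\times C_{p_2^{k_2}}$, and consequently $\mathbb{Z}C_{n_r}\cong \mathbb{Z}C_{p_1^{k_1}}\otimes_{\mathbb{Z}}\mathbb{Z}C_{p_2^{k_2}}$ as $\mathbb{Z}$-algebras, with $\varphi=\varphi_1\otimes\varphi_2$ under $\zeta=\zeta_1\zeta_2$ for primitive $p_i^{k_i}$-th roots $\zeta_i$. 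On each prime-power factor the cyclotomic polynomial $\Phi_{p_i^{k_i}}(z)=\sum_{j=0}^{p_i-1}z^{j p_i^{k_i-1}}$ evaluated at the Sylow generator is exactly $\sigma_{n_r}(P_i)$, so $\ker(\varphi_i)$ is the principal ideal generated by $\sigma_{n_r}(P_i)$. A standard computation with the tensor product of two $\mathbb{Z}$-free algebras then yields
$$\ker(\varphi)\;=\;\sigma_{n_r}(P_1)\cdot \mathbb{Z}C_{n_r}\;+\;\sigma_{n_r}(P_2)\cdot \mathbb{Z}C_{n_r}.$$

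The remaining, genuinely combinatorial task is to upgrade this to a non-negative version: given $0\neq x\in \mathbb{N}C_{n_r}\cap\ker(\varphi)$, some translate $\bar z^d\sigma_{n_r}(P_i)$ is dominated by $x$ coefficient-wise. I would proceed by induction on the weight $w(x)=\sum_i b_i$. The base case concerns minimal-weight elements: these must have weight exactly $\min(p_1,p_2)$, since any lighter non-negative $\mathbb{Z}$-combination of $n_r$-th roots of unity remains nonzero in $\mathbb{C}$, and a direct inspection identifies the minimal elements as translates of $\sigma_{n_r}(P_1)$ or of $\sigma_{n_r}(P_2)$. For the inductive step one locates a translate of some $\sigma_{n_r}(P_i)$ inside the support of $x$, subtracts it (lowering the weight while preserving both non-negativity and membership in $\ker(\varphi)$), and invokes the inductive hypothesis.

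The inductive extraction is the only genuine obstacle, and it is precisely where the assumption that $n_r$ has only two prime divisors is indispensable. The argument analyses the coefficients of $x$ along the cosets of $P_1$ and of $P_2$; the relation $\varphi(x)=0$, combined with the tensor description of $\ker(\varphi)$ derived above, forces some such coset to appear with equal multiplicity throughout the support of $x$. For three or more primes this extraction fails, as there exist non-negative vanishing sums that contain no translate of any single $\sigma_{n_r}(P_i)$. Once extraction is secured, the lemma is immediate: the translate $\bar z^d\sigma_{n_r}(P_i)$ can be normalised to satisfy $0\le d<n_r/p_i$ via the identity $\bar z^{n_r/p_i}\sigma_{n_r}(P_i)=\sigma_{n_r}(P_i)$, and the remainder $x-\bar z^d\sigma_{n_r}(P_i)$ supplies the non-negative tail $\sum b_i\bar z^i$ of the stated decomposition.
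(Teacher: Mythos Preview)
The paper does not prove this lemma: it is quoted from Lam--Leung and used as a black box, so there is no in-paper proof to compare against. Your structural description of $\ker(\varphi)$ is correct and is indeed the starting point of the Lam--Leung argument, but the extraction step---which you rightly flag as the only genuine obstacle---is not actually carried out.

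The induction on weight is circular as written. The lemma asks you to exhibit one translate $\bar z^{d}\sigma_{n_r}(P_i)$ dominated by $x$; your inductive step begins by ``locating a translate of some $\sigma_{n_r}(P_i)$ inside the support of $x$,'' which is precisely the conclusion, so the inductive hypothesis is never invoked. If the induction is meant for the stronger statement (that $x$ decomposes entirely into such translates), then its form is sound but each step still requires the unproven extraction. The base case is no easier: that no vanishing sum has weight below $\min(p_1,p_2)$, and that those of weight exactly $\min(p_1,p_2)$ are rotations of some $\sigma_{n_r}(P_i)$, is already the two-prime case of the R\'edei--de~Bruijn--Schoenberg classification of minimal vanishing sums, not a ``direct inspection.'' The sentence about cosets ``appearing with equal multiplicity'' points at the right phenomenon but is not an argument; one must actually use the $\mathbb{Q}$-linear independence of the monomials $\zeta_1^{i}\zeta_2^{j}$ (over suitable index ranges) to derive explicit coefficient identities, and then argue combinatorially from non-negativity that a full $P_1$- or $P_2$-coset lies in the support with multiplicity at least one. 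That analysis is the entire content of the lemma, and it is missing here.
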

\section{Main part}
\noindent \textit{Proof of Theorem A}.\\
Let $n_r=p_1^{t_1}p_2^{t_2}$
be a maximal modulus in $A$.
Equation~\eqref{eq:genr} can be written in the following way:
\begin{equation}\label{eq:gen}
 \sum_{s=1}^k\frac{z^{a_s}}{1-z^{n_s}}=\sum_{\{s:n_s=n_r\}}\frac{z^{a_s}}{1-z^{n_r}}+
 \sum_{\{s:n_s\neq n_r\}}\frac{z^{a_s}}{1-z^{n_s}}=
 \sum_{n=0}^\infty z^n=\frac{1}{1-z}.
 \end{equation}
 Both sides of~\eqref{eq:gen} are elements in
 $\mathbb{Q}(z)$, the field of rational functions with
rational coefficients in the variable $z$.
As before, denote the least common multiple of $n_1,n_2,\ldots ,n_k$ by $N$.
 By multiplying both sides of~\eqref{eq:gen} by $1-z^N$ we get
\begin{equation}\label{eq:gen1}
 \sum_{s=1}^k\frac{z^{a_s}\cdot (1-z^N)}{1-z^{n_s}}=
 \frac{1-z^N}{1-z}.
 \end{equation}
Hence, both sides of~\eqref{eq:gen1} are in $\mathbb{Z}[z]$. Consequently, by Lemma~\ref{th:geo} the right-hand side of~\eqref{eq:gen1} is in the
kernel of $\varphi \circ \gamma$. Hence the left-hand side is also
in $ker(\varphi \circ \gamma)$. Therefore,
\begin{equation}
\sum_{\{s:n_s=n_r\}}\frac{z^{a_s}\cdot (1-z^N)}{1-z^{n_s}}=
\frac{1-z^N}{1-z^{n_r}}\sum_{\{s:n_s=n_r\}}z^{a_s} \in \text{ker}(\varphi
\circ \gamma).
\end{equation}
Thus, by Lemma~\ref{th:geo} we get:
\begin{equation}\label{eq:uri}
\sum_{\{s:n_s=n_r\}}z^{a_s}\in \text{ker}(\varphi \circ \gamma).
\end{equation}
Hence,
\begin{equation}
\gamma\left(\sum_{\{s:n_s=n_r\}}z^{a_s}\right)=\sum_{\{s:n_s=n_r\}}\overline{z}^{a_s}\in
\text{ker}(\varphi).
\end{equation}
Now, by Lemma~\ref{lemma:ll} we may assume without loss of generality that
\begin{equation}
\gamma\left(\sum_{\{s:n_s=n_r\}}z^{a_s}\right)=\textstyle{\sum_1 +\sum_2},
\end{equation}
where
\begin{equation}
\textstyle{\sum_1}=\overline{z}^d\cdot \sigma
_{n_r}(P_1)=\overline{z}^d\cdot \sum
_{j=0}^{p_1-1}\overline{z}^\frac{j\cdot n_r}{p_1},
\end{equation}
where $d<\frac{n_r}{p_1}$.
And
\begin{equation}
\textstyle{\sum_2}=\sum _{i=0}^{n_r-1} b_i\overline{z}^i,\quad
b_i\geq 0.
\end{equation}
Notice that $\ker \gamma=(z^{n_r}-1)\mathbb{Z}[z]$. Consequently,
the restriction of $\gamma$ to polynomials in $\mathbb{Z}[z]$ with
degree smaller than $n_r$ is 1-1. Let
\begin{equation}
g(z)=z^d\cdot \sum _{j=0}^{p_1-1}z^\frac{j\cdot n_r}{p_1}.
\end{equation}
Then $\gamma (g(z))=\sum_1$. Hence
\begin{equation}
\gamma\left(\sum_{\{s:n_s=n_r\}}z^{a_s}-g(z)\right)=\gamma \left(\sum _{i=0}^{n_r-1}
b_iz^i\right)=\textstyle{\sum_2}.
\end{equation}
By~\eqref{eq:cut}, the degree of
\begin{equation}
\sum_{\{s:n_s=n_r\}}z^{a_s}-g(z)
\end{equation}
is smaller than $n_r$. Therefore by the 1-1 property on such
polynomials,
\begin{equation}
\sum_{\{s:n_s=n_r\}}z^{a_s}-g(z)=\sum _{i=0}^{n_r-1} b_iz^i.
\end{equation}
Consequently,
\begin{equation}\label{eq:pirok}
\sum_{\{s:n_s=n_r\}}z^{a_s}=g(z)+\sum _{i=0}^{n_r-1} b_iz^i=z^d\cdot
\sum _{j=0}^{p_1-1}z^\frac{j\cdot n_r}{p_1}+\sum _{i=0}^{n_r-1}
b_iz^i.
\end{equation}
By~\eqref{eq:gen} and~\eqref{eq:pirok} ,
\begin{equation}\label{eq:union}
\frac{z^d\cdot \sum _{j=0}^{p_1-1}z^\frac{j\cdot n_r}{p_1}}{1-z^{n_r}}+\frac{\sum
_{i=0}^{n_r-1} b_iz^i}{1-z^{n_r}}+\sum_{\{s:n_s\neq n_r\}}\frac{z^{a_s}}{1-z^{n_s}}=\frac{1}{1-z}.
\end{equation}

Notice that
\begin{equation}\label{eq:david}
\frac{z^d\cdot \sum _{j=0}^{p_1-1}z^\frac{j\cdot
n_r}{p_1}}{1-z^{n_r}}=\frac{z^d}{1-z^{\frac{n_r}{p_1}}}.
\end{equation}
So, by~\eqref{eq:union} and by~\eqref{eq:david}
\begin{equation}\label{eq:notlast}
\sum_{\{s:n_s\neq
n_r\}}\frac{z^{a_s}}{1-z^{n_s}}+\frac{z^d}{1-z^{\frac{n_r}{p_1}}}
+\frac{\sum _{i=0}^{n_r-1} b_iz^i}{1-z^{n_r}}=\frac{1}{1-z}.
\end{equation}
Since $b_i\geq 0$, every summand $z^{a_s}$ of the left hand side of
equation~\eqref{eq:pirok} is either a summand of $g(z)$ if
$a_s\equiv d (\text{mod } \frac{n_r}{p_1})$ or a summand in $\sum
_{i=0}^{n_r-1} b_iz^i$. So,~\eqref{eq:notlast} is a generating
function of a new ECS $B$, where $B$ is obtained by consolidation of
the $p_1$ arithmetic progressions:
\begin{equation}
\left\{d(n_r), d+\frac{n_r}{p_1}(n_r), \ldots ,
d+\frac{(p_1-1)n_r}{p_1}(n_r)\right\}\subset A,
\end{equation}
into one arithmetic progression $d(\frac{n_r}{p_1})$ in $B$. Hence
$A$ is a primely split
of the ECS $B$, and this completes the proof. \qed \\
It is important to notice that there exist ECS which are not prime splitting of any ECS.
By theorem A the following example which is a particular case of \cite[Example 2.5]{LL00} is minimal.
\begin{example}
The following ECS is not a prime splitting of any ECS.
\begin{align}
A=\{2(6),4(6),1(10),3(10),7(10),9(10),0(15),\nonumber\\
5(30),6(30),12(30),18(30),24(30),25(30)\}
\end{align}
Notice that the maximal modulus is $30$. The reason that $A$ is not
a primely split of any ECS follows from the fact that there is
no way to split the following vanishing sum
\begin{equation}
\xi ^5+\xi ^6+\xi ^{12}+\xi ^{18}+\xi ^{24}+\xi ^{25}=0,
\end{equation}
where $\xi$ is a $30$-\textit{th} primitive root of unity, into two
vanishing sums (see \cite{LL00}).
\end{example}
\noindent \textit{Proof of Corollary~\ref{th:ECS3}}.\\
First, notice that for two ECS, $A$ and $B$, if  $A\models B$, then any modulus of $B$ is a divisor of a modulus of $A$.
Now, since all moduli of $A$ admit at most two prime factors, then any maximal modulus of $A$ also admits at most two prime factors.
By applying Theorem A to each maximal modulus we proceed by induction noticing that in each step of the induction
all the moduli (and hence all the maximal moduli) admit at most two prime factors until we get the trivial ECS.\qed \\
\begin{remark}
Note that Corollary~\ref{th:ECS3} gives a way to construct all ECS
with $N=p_1^{s_1}p_2^{s_2}$ for two given primes $p_1,p_2$.
\end{remark}
For the next proof note that by the Chinese remainder theorem, an
ECS cannot contain coprime moduli.\\
\textit{Proof of Corollary~\ref{th:ECS4}}.\\
 Assume that there is no modulus
 $n_j=p_1^{d_1}p_2^{d_2}p_3^{d_3}, d_1,d_2,d_3>0$.\\
Let $A=A_1$. By the hypothesis of the corollary and by the above
assumption there is a maximal modulus $p_1^{l_1}p_2^{l_2}
(l_1,l_2\geq 1)$. Hence by Theorem A there is an ECS
$A_1\models A_2$. We proceed by induction. As long as there is a
modulus $p_1^{l_1}p_2^{l_2} (l_1,l_2\geq 1)$ there is a maximal
modulus of the same form.  The sequence $A_1\models A_2\ldots
\models A_l$, must terminate. The terminal ECS, $A_l$ has no modulus
of the form $p_1^{l_1}p_2^{l_2} (l_1,l_2\geq 1)$. Hence, there is
either a modulus $p_1^{l_1} (l_1>0)$ or a modulus $p_2^{l_2}
(l_2>0)$. Since we assumed that $A$ contains moduli
$p_1^{m_3}p_3^{m_4}$ and $p_2^{m_5}p_3^{m_6}$, then, in both cases
$A_l$ contain coprime moduli. This cannot happen by the Chinese
remainder theorem.\qed

\end{document}